\newtheorem{theorem}{Theorem}[section]
\newtheorem{corollary}[theorem]{Corollary} 
\newtheorem{proposition}[theorem]{Proposition}
\newtheorem{condition}[theorem]{Condition} 
\DeclareMathOperator{\rk}{rk} 
\DeclareMathOperator{\gk}{\mathcal{GK}} 
\DeclareMathOperator{\s}{\ast} 
\DeclareMathOperator{\Hom}{Hom}  
\begin{document} 

\title{A note on the simple modules over McConnell--Pettit algebras} 
\author{Ashish Gupta}
\address{
Harish-Chandra Research Institute,\\
Chhatnag Rd., Jhunsi,\\
Allahabad. 211019\\
India.}

\email{agupta@hri.res.in}

\subjclass{Primary 16S35; Secondary 16D30}

\keywords{quatum polynomial, quantum torus, crossed product}

\date{June 17, 2011}

\begin{abstract}
We consider simple modules over the McConnell--Pettit algebras.
We show that both induction and contraction yield 
simple modules for the extremes of the global dimension. 
\end{abstract}

\maketitle 

\section{Introduction}

We recall that a quantum polynomial algebra over a field $F$ is defined as the associative algebra generated over $F$ by 
$y_1, \cdots, y_n$, $n > 1$, subject only to the relations:
\begin{equation}
\label{0.1}
y_iy_j = q_{ij}y_jy_i,
\end{equation} 
where $1 \le i,j \le n$ and $q_{ij} \in F$ are nonzero scalars. Note that 
\[ q_{ii} = 1 = q_{ij}q_{ji}.  \]  
These algebras play an important role in noncommutative geometry (see \cite{MA}).
We denote by $Q$ the matrix $(q_{ij})$ of \emph{multiparameters}.

Localization at the monoid generated by $\{y_1, \cdots, y_n\}$ yields $\Lambda(F, Q)$, the so called \emph{Multiplicative analogue of the Weyl algebra}. Clearly, $\Lambda(F, Q)$ has $y_1^{\pm 1}, \cdots, y_n^{\pm 1}$ for generators and (\ref{0.1}) for the  defining relations. 
It is also known as \emph{quantum Laurent polynomial algebra}, \emph{McConnell--Pettit algebra} and $\mathbb Z^n$-\emph{quantum torus}.  

We recall the structure of a twisted group algebra $F \s A$  (e.g., \cite{PA}) of a finitely generated torsion-free abelian group $A$ over $F$. 
This is an $A$-graded algebra $\oplus_{a \in A} F \bar a$, where multiplication is a ``twisted" version of the group multiplication:    
\[ \bar {a}_1 \bar {a}_2 = \lambda(a_1, a_2)\overline {a_1a_2}, \]
where $a_1, a_2 \in A$ and $\lambda : A \times A \rightarrow F^*$ is a $2$-cocycle: 
\begin{equation}
\label{cocy}
 \lambda(a_1, a_2)\lambda(a_1a_2, a_3) = \lambda(a_2, a_3)\lambda(a_1, a_2a_3), \ \ \ \ \  a_1, a_2, a_3 \in A. 
\end{equation}
The $\Lambda(Q,F)$ are precisely the twisted group algebras $F \s A$.
By definition, $\alpha \in F \s A$ may be presented as $\alpha = \sum_{a \in A}\mu_a \bar a$, where 
$\mu_a \in F$ is nonzero only for a finite (possibly empty) subset of $A$ known as the \emph{support} of $\alpha$ in $A$. 
For a subgroup $B < A$,  the subalgebra of elements $\alpha$ with support contained in $B$ is twisted group algebra $F \s B$ of $B$ over $F$.    
If $A/B$ is infinite cyclic, say $A/B = \langle tB \rangle$ for $t \in A$, then 
$F \s A$ is a skew-Laurent extension 
\[ F \s A = (F \s B)[\bar t^{\pm 1}, \sigma], \]   
where $\sigma$ is the automorphism of $F \s B$ given by
\[ \sigma(\beta) = \bar t \beta \bar t^{-1} \] 
for all $\beta \in F \s B$.  

\section{Simple modules}

Our focus shall be on the simple $F \s A$-modules. 
We can associate with each finitely generated $F \s A$-module $M$ its Gelfand--Kirillov dimension (GK dimension) which is  a measure of the growth of $M$. To give some idea of this dimension, let $a_1, \cdots, a_n$ be a basis of $A$ and $V_0$ be the subspace of $F \s A$:    
\[ V_0 = F + \sum F\bar {a}_i + \sum F\bar {a}_i^{-1} \] 
Let $W_0$ be a finite dimensional subspace of $M$ such that $W_0(F \s A)= M$. 
Define $f : \mathbb N \rightarrow \mathbb N$ by
$f(m) = \dim_F(W_m)$, where $W_m = W_0V_0^m$.
Then $\gk(M) = \lim \sup (\log f(n)/\log n)$.
For details on the GK dimension, we refer the interested reader to \cite{KL} or \cite[Chapter 8]{MR}. 
 
A dimension for modules over crossed products was introduced and studied by C.J.B. Brookes and J.R.J. Groves in \cite{BG}. It was shown to coincide with the GK dimension. The following characterization of GK dimension for 
$F \s A$-modules thus follows from \cite{BG}:
 
\begin{proposition}[\cite{BG}]
\label{BG_dim} 
Let $M$ be a finitely generated $F \s A$-module. Then $\gk(M)$ equals the supremum of the ranks of subgroups 
$B$ of $A$ such that $M$ is not torsion as $F \s B$-module. Furthermore, let $a_1, \cdots, a_n$ 
freely generate $A$ and $\mathcal F$ denote the family of subgroups of $A$: 
\[ \mathcal F = \{ \langle X \rangle : X \subset \{a_1, \cdots, a_n \} \} \]
with the convention that $\langle \emptyset \rangle = \langle 1 \rangle $. 
Then $\gk(M)$ is the supremum of the ranks of subgroups $B$ 
in $\mathcal F$ such that $M$ is not $F \s B$-torsion.  
\end{proposition}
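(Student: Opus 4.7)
The first assertion is the Brookes--Groves characterization of GK dimension for finitely generated crossed-product modules \cite{BG}, which I take as a black box. The substantive content is the second assertion: that the supremum is attained on the finite family $\mathcal{F}$ of coordinate subgroups relative to a fixed basis. Since $\mathcal{F}$ is a subfamily of all subgroups of $A$, the inequality $\sup_{\mathcal{F}} \rk(B) \le \gk(M)$ is automatic, and only the reverse requires work.

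My plan is to argue by contrapositive: assume $M$ is $F \s \langle X\rangle$-torsion for every $X \subset \{a_1, \ldots, a_n\}$ with $|X| = d+1$, and deduce $\gk(M) \le d$. Via the first assertion applied to $M$, this reduces to showing that $M$ is $F \s B$-torsion for every subgroup $B \le A$ of rank $d+1$. Fix such a $B$ and any $m \in M$; put $I = \mathrm{Ann}_{F \s A}(m)$ and consider the cyclic module $N = (F \s A)m \cong F \s A / I$. The torsion hypothesis on $m$ supplies, for each $X$ of size $d+1$, a nonzero element $\alpha_X \in I \cap F \s \langle X\rangle$.

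The crux is then a dimension bound: a left ideal $I$ of $F \s A$ that meets every $F \s \langle X\rangle$ (with $|X|=d+1$) nontrivially must satisfy $\gk(F \s A / I) \le d$. In the untwisted commutative setting this is a classical variety-projection argument: an irreducible component of $V(I) \subseteq (F^{\times})^n$ of dimension $d+1$ would, via a coordinate transcendence basis, project dominantly onto some coordinate $(d+1)$-torus, contradicting the existence of a nonzero $\alpha_X$ in those coordinates. In the general quantum torus case, the same idea should carry over by applying the Brookes--Groves first assertion to $N$ and transporting the obstruction provided by $\alpha_X$; adapting this coordinate-projection argument to quantum tori is what I expect to be the main technical obstacle. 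Once the dimension bound is in hand, a final application of the first assertion to $N$ forces $N$ to be $F \s B$-torsion, yielding $I \cap F \s B \ne 0$ -- i.e., a nonzero annihilator of $m$ in $F \s B$, which is what was needed.
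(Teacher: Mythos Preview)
The paper gives no proof of this proposition; it is stated with the citation \cite{BG} and no argument follows. Both assertions are simply attributed to Brookes--Groves, so there is nothing in the paper against which to compare your approach.

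On its own merits, your proposal is a reasonable outline but not a proof: you yourself flag the ``main technical obstacle'' and do not resolve it. The architecture --- pass to a cyclic module $N=\mathcal A/I$, establish $\gk(N)\le d$ directly, then invoke the first assertion to conclude that $N$ is $F\s B$-torsion for every rank-$(d{+}1)$ subgroup $B$ --- is sound, but the middle step carries all the weight. Your commutative heuristic (project the vanishing locus onto coordinate tori) has no evident substitute when the torus is genuinely quantum, and a naive slice-by-slice growth count using the individual annihilators $\alpha_X$ does not readily combine to the required $O(k^{d})$ bound once $n-d\ge 2$. In effect the dimension bound you isolate \emph{is} the second assertion restated for cyclic modules, so what remains after your reductions is still the full content of the result rather than a residual detail.
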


A lower bound for the GK dimension of a nontrivial finitely generated $F \s A$-module has been obtained by Brookes in \cite{BR}. It is natural to expect simple modules among modules having the least possible GK dimension.
Note that $A$ may contain subgroups $B$ so that $F \s B$ is commutative, that is, the cocycle $\lambda$ may be trivial when restricted to $B$. 

\begin{theorem}[Theorem 3 of \cite{BR}]
\label{B_T3}
If $F \s A$ has a finitely generated nonzero module $M$ with $\gk(M) = s$, then $A$ has a subgroup $B$ with corank $s$ such that $F \s B$ is commutative. 
\end{theorem}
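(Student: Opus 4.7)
My plan is to apply Proposition~\ref{BG_dim} to pin down a maximal ``non-torsion direction'' for $M$, and then to extract a commutative subalgebra by analyzing minimal annihilators of a non-torsion element. Fix a free basis $a_1, \ldots, a_n$ of $A$, and, invoking Proposition~\ref{BG_dim}, choose $X \subseteq \{a_1, \ldots, a_n\}$ of size $s$ such that $M$ is not $F \s \langle X \rangle$-torsion. Write $B_0 = \langle X \rangle$ and $Y = \{a_1, \ldots, a_n\} \setminus X$. Since $\gk(M) = s$, for every $y \in Y$ the module $M$ must be torsion over $F \s \langle X \cup \{y\} \rangle$, as otherwise Proposition~\ref{BG_dim} would give $\gk(M) \geq s + 1$.

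Next, I would pick $m \in M$ not annihilated by any nonzero element of $F \s B_0$; such an $m$ exists since $F \s B_0$ is a Noetherian domain. For each $y \in Y$, the skew-Laurent decomposition $F \s \langle X \cup \{y\} \rangle = (F \s B_0)[\bar y^{\pm 1}, \sigma_y]$ and the torsion property of $M$ over this ring give a nonzero relation $p_y(\bar y)\,m = 0$ with $p_y \in (F \s B_0)[\bar y, \sigma_y]$ of minimal positive $\bar y$-degree $d_y$.

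The crucial step is to construct $B \le A$ of rank $n - s$ with $F \s B$ commutative, using the family $\{p_y\}_{y \in Y}$. The intended strategy is to select, for each $y \in Y$, a positive power $\bar y^{k_y}$ whose action on $m$, when reduced via $p_y$, is controlled enough that the commutators between these powers act trivially on $m$. Letting $B$ be the subgroup of $A$ generated by the corresponding $y^{k_y}$ together with any compatible elements of $B_0$ gives a candidate; one then verifies that $\operatorname{rk} B = n - s$ and that the cocycle $\lambda$ restricts symmetrically on $B \times B$, which is equivalent to $F \s B$ being commutative.

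I expect the main obstacle to lie in this last step: the bicharacter $\beta(a, a') = \lambda(a, a')\lambda(a', a)^{-1}$ on $A \times A$ can have intricate behavior, particularly when $F$ contains roots of unity, where small sublattices can already trivialize parts of the cocycle (as with the quantum torus at a root of unity). Thus the exponents $k_y$ must be tuned to ensure both that $\beta$ vanishes on $B \times B$ and that the rank of $B$ is exactly $n - s$, without collapsing. This bookkeeping constitutes the real technical content of the theorem; rather than reinventing it, the cleanest route is to follow Brookes's original argument in \cite{BR}.
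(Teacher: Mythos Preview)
The paper does not supply a proof of this theorem; it is quoted verbatim from \cite{BR} (as Theorem~3 there) and used as a black box throughout. So there is nothing in the present paper to compare your proposal against.

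On its own merits, your sketch has a genuine gap precisely where you flag it, and it is not merely bookkeeping. Commutativity of $F \s B$ is the condition that the alternating bicharacter $\beta(a,a') = \lambda(a,a')\lambda(a',a)^{-1}$ vanishes on $B \times B$; this is a statement about the cocycle alone, not something one can read off from the action on a single element $m \in M$. Your mechanism---choose powers $y^{k_y}$ so that certain commutators annihilate $m$, then declare $B = \langle y^{k_y} : y \in Y\rangle$ (perhaps adjusted by elements of $B_0$)---does not force $\beta$ to vanish on $B \times B$. Concretely, take $n = 3$ with $q_{12} = q_{13} = q_{23} = q$ for $q$ not a root of unity, and suppose $s = 1$ with $X = \{a_1\}$, $Y = \{a_2, a_3\}$. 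Then $\beta(a_2^{k_2}, a_3^{k_3}) = q^{k_2 k_3} \ne 1$ for all positive $k_2, k_3$, so no subgroup of the form $\langle a_2^{k_2}, a_3^{k_3}\rangle$ is isotropic for $\beta$; the corank-$1$ commutative subalgebra that does exist (e.g.\ $\langle a_2,\, a_1 a_3\rangle$) requires genuinely mixing the $X$- and $Y$-directions. Nothing in your minimal-polynomial data $p_y$ singles out such mixed generators, and the step ``commutators act trivially on $m$, hence $\beta|_{B \times B} = 1$'' is simply not valid in general. The passage from ``a module of GK dimension $s$ exists'' to ``$\beta$ has an isotropic subgroup of rank $n - s$'' is the entire content of Brookes's theorem; deferring to \cite{BR} at that point makes the proposal a restatement rather than a proof.
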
  

In this connection, the following theorem is shown in \cite{BR} which was initially a conjecture in \cite{MP}:

\begin{theorem}[Theorem A of \cite{BR}]
\label{MP_conj}
The global and Krull dimensions of $F \s A$ equal the supremum of the ranks of subgroups $B \le A$ so that $F \s B$ is commutative. 
\end{theorem}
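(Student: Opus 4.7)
The plan is to prove both dimensions equal the stated supremum $c$ by establishing the lower and upper bounds separately, using the structure of $F \s A$ as a crossed product over a maximal commutative subalgebra.

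For the lower bound, I would first fix a subgroup $B \le A$ of rank $c$ with $F \s B$ commutative. After a change of basis absorbing the (symmetric) restriction of $\lambda$ to $B \times B$, $F \s B$ becomes a Laurent polynomial algebra $F[x_1^{\pm 1}, \ldots, x_c^{\pm 1}]$, whose global and Krull dimensions both equal $c$. Since any transversal of $B$ in $A$ furnishes an $F \s B$-basis for $F \s A$, the inclusion $F \s B \hookrightarrow F \s A$ is (faithfully) free, hence faithfully flat. I would then invoke the standard facts that, for Noetherian rings, weak and global dimensions coincide and faithfully flat base extensions do not decrease them; this yields $\text{gl.dim}(F \s A) \ge c$ and $\text{Kdim}(F \s A) \ge c$.

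For the upper bound, the plan is an induction on the deficiency $d = \text{rk}(A) - c$. The base case $d = 0$ gives $F \s A = F \s B$ commutative, so both dimensions equal $c$. For the inductive step I would pick a subgroup $A' < A$ of corank one containing a maximal commutative subgroup and write $F \s A = (F \s A')[\bar t^{\pm 1}; \sigma]$ for some $t \in A \setminus A'$ and conjugation automorphism $\sigma$. The inductive hypothesis supplies $\text{gl.dim}(F \s A'), \text{Kdim}(F \s A') \le c$, and the task reduces to showing this skew-Laurent extension preserves both dimensions.

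The crux, and the step I expect to be the main obstacle, is exactly this preservation statement: a skew-Laurent extension $R[\bar t^{\pm 1}; \sigma]$ may in general inflate either dimension by one, so preserving them demands that $\sigma$ be \emph{sufficiently outer} on $R = F \s A'$ (for instance, inducing a free action on a dense family of primes). I would try to derive such outerness from the maximality defining $c$: if $\sigma$ were inner, say $\sigma(r) = u r u^{-1}$ for some unit $u \in F \s A'$, or were inner modulo a well-chosen prime, one could twist $\bar t$ by $u^{-1}$ to produce a group-like element of $F \s A$ that commutes with an $F \s$-subalgebra of rank $c$, yielding a commutative subgroup of $A$ of rank $c+1$ and contradicting maximality. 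Making this contradiction precise is delicate because ``inner'' automorphisms of the noncommutative $F \s A'$ are parametrized by arbitrary units rather than by elements of $A'$ itself; I would expect to pass to a localization at a suitable prime of $F \s B$ (where one has more control over units) and combine this cocycle-level analysis with the standard homological theorem that an outer skew-Laurent extension preserves global and Krull dimension, which is the technical input I would cite from McConnell--Robson.
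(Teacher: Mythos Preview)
The paper does not prove this theorem at all: it is quoted verbatim as Theorem~A of \cite{BR}, where Brookes settled what had been an open conjecture of McConnell and Pettit from \cite{MP}. There is therefore no ``paper's own proof'' to compare your proposal against.

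That said, your outline deserves a brief assessment on its merits. The lower bound is essentially the easy direction and was already in \cite{MP}; your faithfully-flat argument is fine for global dimension, and for Krull dimension one can simply exhibit a chain of $\sigma$-invariant ideals coming from the commutative Laurent subalgebra. The genuine difficulty, as you correctly isolate, is the upper bound, and here your inductive scheme has a real gap. The condition that $\sigma$ not be inner on $F \s A'$ is \emph{not} sufficient to force $\mathrm{Kdim}$ or $\mathrm{gl.dim}$ of the skew-Laurent extension to stay put rather than jump by one; what one needs is a much more stringent ``outerness on the prime spectrum'' (roughly, that no $\sigma$-orbit of primes is finite, or an analogous statement at the level of critical modules). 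Your proposed contradiction---twist $\bar t$ by a unit $u$ to enlarge the commutative subgroup---only rules out literal innerness, because units of $F \s A'$ are indeed trivial; it says nothing about $\sigma$ becoming inner after localizing at a prime, which is exactly the phenomenon that can cause the dimension to rise. Making this work would require controlling all such local obstructions simultaneously, and there is no evident way to extract that from the maximality of $c$ alone.

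This is not a minor technicality: the result resisted proof for over a decade precisely because the naive induction does not close. Brookes' actual argument in \cite{BR} proceeds quite differently, going through the module-theoretic Theorem~\ref{B_T3} (the existence of large commutative subgroups forced by modules of small GK dimension) and linking GK dimension of critical modules to Krull dimension, rather than by a direct skew-Laurent induction.
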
  

We let $\dim(F \s A)$ stand for either of these dimensions. As a corollary we have,

\begin{corollary}
\label{B_ine}
If $M$ is a nonzero finitely generated $F \s A$-module then  
\begin{equation} 
\gk(M) \ge \rk(A) - \dim(F \s A) 
\end{equation}
\end{corollary}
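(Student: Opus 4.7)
The plan is to obtain the inequality as an immediate combination of the two preceding theorems of Brookes quoted in the excerpt, namely Theorem~\ref{B_T3} and Theorem~\ref{MP_conj}.

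First, I would set $s = \gk(M)$; since $M$ is a nonzero finitely generated $F \s A$-module, Theorem~\ref{B_T3} applies and furnishes a subgroup $B \le A$ of corank $s$ such that $F \s B$ is commutative. Unpacking the definition of corank, this means $\rk(B) = \rk(A) - s$.

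Second, I would invoke Theorem~\ref{MP_conj}, which identifies $\dim(F \s A)$ with the supremum of ranks of subgroups $B' \le A$ for which $F \s B'$ is commutative. Since $B$ is one such subgroup, we get
\[
\rk(A) - s \;=\; \rk(B) \;\le\; \dim(F \s A),
\]
and rearranging yields $\gk(M) = s \ge \rk(A) - \dim(F \s A)$, which is the desired inequality.

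There is essentially no obstacle beyond correctly reading off ``corank'' in Theorem~\ref{B_T3} as $\rk(A) - \rk(B)$ and recognising that the subgroup produced by Theorem~\ref{B_T3} is admissible as a candidate for the supremum in Theorem~\ref{MP_conj}; the content of the corollary is entirely absorbed into those two results.
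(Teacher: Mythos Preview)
Your argument is correct and is exactly the derivation the paper intends: the corollary is stated immediately after Theorems~\ref{B_T3} and~\ref{MP_conj} with no separate proof, and your combination of the two (reading corank as $\rk(A)-\rk(B)$ and comparing against the supremum) is precisely how it follows.
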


Thus the minimum possible GK dimension for a nontrivial finitely generated $F \s A$-module $M$ is $n - d$, where 
$d = \dim(F \s A)$. For $d = 1$, this was already shown in \cite[Theorem 6.2]{MP}. 

\begin{proposition}
\label{hol_mod}
Let $M$ be a finitely generated $F \s A$-module. If $\gk(M) = \rk(A) - \dim(F \s A)$ then $M$ has  finite length.
\end{proposition}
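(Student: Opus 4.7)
The plan is to deduce finite length from a Bernstein-type multiplicity argument. Set $s = n - d$ so that $\gk(M) = s$.

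First I would observe that every nonzero subquotient of $M$ has GK dimension exactly $s$. Indeed, if $0 \to N \to M \to M/N \to 0$ is a short exact sequence with $N$ and $M/N$ both nonzero and finitely generated, Corollary~\ref{B_ine} gives $\gk(N), \gk(M/N) \ge s$, while the monotonicity of $\gk$ under subquotients forces both to be $\le \gk(M) = s$. This stability of the dimension under subquotients is what makes $s$ play the role of Bernstein's dimension in the theory of $D$-modules.

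Next I would attach a multiplicity $e(L)$ to each finitely generated $F \s A$-module $L$ with $\gk(L) = s$, defined as $s!$ times the leading coefficient of the eventual polynomial computing $f_L(m) = \dim_F(W_0 V_0^m)$. The standard theory of integer-valued (numerical) polynomials shows $e(L)$ is a positive integer depending only on $L$, so $e(L) \ge 1$ whenever $L$ is nonzero of GK dim $s$. The key property to establish is additivity: whenever $0 \to N \to L \to L/N \to 0$ is a short exact sequence with all three modules of GK dim $s$, one has $e(L) = e(N) + e(L/N)$.

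With this in place, suppose for contradiction that $M$ has infinite length. Since $F \s A$ is Noetherian and $M$ is finitely generated, $M$ is Noetherian; infinite length therefore means $M$ is not Artinian, giving a strictly descending chain $M = M_0 \supsetneq M_1 \supsetneq \cdots$. By the first step each $M_{i-1}/M_i$ is nonzero of GK dim $s$, so iterated additivity yields $e(M) \ge e(M/M_k) = \sum_{i=1}^k e(M_{i-1}/M_i) \ge k$ for every $k$, contradicting the finiteness of $e(M)$.

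The main obstacle is the additivity of multiplicity in the twisted group algebra setting. The filtration of $F \s A$ by powers of $V_0$ is not a central filtration in the classical commutative sense, so the usual Artin--Rees argument must be adapted: one must check that any good filtration on $L$ restricts to a good filtration on $N$ and induces one on $L/N$ with compatible leading coefficient. A natural route is to pass to the associated graded of a chosen filtration of $F \s A$ and reduce to a commutative Hilbert polynomial computation; alternatively one may invoke the corresponding results on good filtrations over finitely generated Noetherian filtered algebras from \cite[Chapter~8]{MR} and \cite{KL}.
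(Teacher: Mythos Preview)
Your overall strategy coincides with the paper's: first use Corollary~\ref{B_ine} to see that every nonzero subquotient of $M$ again has GK dimension $s=\rk(A)-\dim(F\s A)$, and then invoke a multiplicity (finite--partitivity) argument to conclude that any strictly descending chain terminates. The paper does not develop the multiplicity itself; it simply cites \cite[Lemma~5.6 and Section~5.9]{MP} for exactly the statement that a chain whose successive quotients all have GK dimension equal to $\gk(M)$ must be finite. So your proposal is not a different route---it is an attempt to unpack what the paper outsources to \cite{MP}.

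One point in your sketch needs more care than you indicate. You assert that $f_L(m)=\dim_F(W_0V_0^m)$ is \emph{eventually a polynomial} and then propose to recover this by ``passing to the associated graded of a chosen filtration of $F\s A$ and reducing to a commutative Hilbert polynomial computation''. For the filtration by powers of $V_0$, the associated graded ring of a quantum torus is again a twisted (noncommutative) algebra---the relations $\bar a_i\bar a_j=q_{ij}\bar a_j\bar a_i$ survive in each graded piece---so the ``somewhat commutative'' machinery of \cite[Chapter~8]{MR} does not apply directly, and the existence of an integer-valued additive multiplicity is not automatic from that source. This is precisely the technical content that \cite[\S5]{MP} supplies for the algebras $\Lambda(F,Q)$ specifically. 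So while your outline is correct, the ``main obstacle'' you flag is a genuine one, and its resolution lies in \cite{MP} rather than in the general filtered-algebra references you cite; at that point your argument and the paper's collapse to the same citation.
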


\begin{proof}
Let 
\begin{equation}
\label{des_ch} 
M = M_0 > M_1 > \cdots > M_i > M_{i + 1} > \cdots  
\end{equation}
be a (strictly) descending chain of submodules in $M$.
By Corollary \ref{B_ine}, $\gk(M_i/M_{i + 1}) = \gk(M)$ for $i \ge 0$. By \cite[Lemma 5.6 and Section 5.9]{MP}, 
the sequence (\ref{des_ch}) halts.
\end{proof}

We now fix some notation. For the remainder of this note, $B$ is always a subgroup such that $A/B$ is infinite cyclic.
We set $\mathcal B = F \s B$, $\mathcal A = F \s A$ and $n = \rk(A)$. As already observed in the introduction, $\mathcal A = \mathcal B[\bar t^{\pm 1}, \sigma]$, where 
$t$ generates $A$ modulo $B$. It is known (e.g. \cite{MP}) that 
$S := \mathcal B \setminus \{0\}$ is an Ore subset in $\mathcal A$. The corresponding ring of fractions is a skew-Laurent polynomial ring $\Sigma = \mathscr D[\bar t^{\pm 1}, \sigma]$, where $\mathscr D = \mathcal B S^{-1}$ is the quotient division ring of $\mathcal B$.  We have thus embedded $\mathcal A$ in a PID. 

\subsection{Induction from simple $\mathcal B$-modules}
 
Let $V$ be any simple $\mathcal B$-module. Then $W = V \otimes_{\mathcal B} \mathcal A$ is an $\mathcal A$-module which in general may not be simple or even artinian. It is, however, GK-\emph{critical} in the sense that if $W_1 < W$ is a nonzero submodule of $W$ then $\gk(W/W_1) < \gk(W)$. 
This is an immediate consequence of \cite[Lemma 2.4]{BG}.
It was shown in \cite[Theorem 6.1]{HM} that if 

\begin{condition}
\label{HM_con}
No simple $\mathcal A$-module has finite length as $\mathcal B$-module
\end{condition}  

holds then $W$ is simple. 
Since $\mathcal B$ is noetherian, the above condition is the same as demanding that if a simple $\mathcal A$-module is finitely generated as $\mathcal B$-module then it has an infinite strictly descending chain of $\mathcal B$-submodules.
Since $\mathcal B$ is not (right) artinian, it follows that Condition \ref{HM_con} is implied by the following 

\begin{condition}
\label{HM_str_con1}
If a simple $\mathcal A$-module $U$ is finitely generated as $\mathcal B$-module, then $U$ is not $\mathcal B$-torsion.    
\end{condition}

We shall now give some examples where Condition \ref{HM_str_con1} (and hence \ref{HM_con}) is satisfied. 
By Lemma \cite[Lemma 2.4]{BG}, $\gk(W) = \gk(V) + 1$ holds irrespective of whether $W$ remains simple or not.

\begin{proposition}
\label{dim_n-1_cs}
If $\mathcal B$ is commutative and $\mathcal A$ has center $F$ then Condition \ref{HM_str_con1} holds. 
\end{proposition}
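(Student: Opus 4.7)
The plan is a proof by contradiction. Suppose $U$ is a simple $\mathcal A$-module that is both finitely generated and torsion as a $\mathcal B$-module; the goal is to exhibit a proper, nonzero, $\sigma$-stable ideal of $\mathcal B$, and then to contradict its existence using $Z(\mathcal A) = F$.

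Set $I = \operatorname{Ann}_{\mathcal B}(U)$. Writing $U = \sum_{i=1}^r \mathcal B u_i$, we have $I = \bigcap_i \operatorname{Ann}_{\mathcal B}(u_i)$; each summand is nonzero by the torsion hypothesis, and since $\mathcal B$ is a commutative domain, this finite intersection is itself nonzero, so $I \neq 0$. Properness follows from $U \neq 0$. For $\sigma$-stability, $\bar t^{\pm 1}U = U$ and $\sigma(b)U = \bar t (bU) = 0$ whenever $b \in I$, so $\sigma(I) \subseteq I$, and symmetrically $\sigma^{-1}(I) \subseteq I$.

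Next, make $\sigma$ explicit. Since $\mathcal B = F \s B$ is commutative, a cohomologous choice of the cocycle puts the standard basis $\{\bar b : b \in B\}$ into $\sigma$-eigenvectors: $\sigma(\bar b) = \chi(b)\bar b$, where $\chi : B \to F^*$ is the character $\chi(b) = \lambda(t,b)\lambda(b,t)^{-1}$. A direct analysis of when a general $\mathcal A$-monomial $\bar{b'}\bar t^k$ is central shows it is central iff $\chi(b') = 1$ and $\chi^k \equiv 1$ on $B$; hence $Z(\mathcal A) = F$ is equivalent to $\chi$ being injective with torsion-free image, and in particular the scalars $\{\chi(b) : b \in B\}$ are pairwise distinct.

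Finally, a Vandermonde argument concludes the proof. Choose $0 \neq f = \sum_{b \in S} c_b \bar b \in I$ with $S \subseteq B$ finite and every $c_b \neq 0$. Applying $\sigma$, $\sigma^m(f) = \sum_{b \in S} c_b \chi(b)^m \bar b \in I$ for $m = 0, 1, \ldots, |S|-1$; the coefficient matrix factors as $\operatorname{diag}(c_b)$ times the Vandermonde matrix $\bigl(\chi(b)^m\bigr)_{m,b}$, which is invertible because the $\chi(b)$ for $b \in S$ are distinct. Therefore each $\bar b$ with $b \in S$ lies in $I$; but every $\bar b$ is a unit of $\mathcal B$, so $I = \mathcal B$, contradicting properness. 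The subtle step is the center calculation: reducing $Z(\mathcal A) = F$ to distinctness of the $\chi(b)$ requires one to rule out central elements of $\mathcal A$ having positive $\bar t$-degree, and it is this that forces $\chi(B)$ to be torsion-free.
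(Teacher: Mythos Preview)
Your proof is correct and takes a genuinely different route from the paper's. The paper argues via GK dimension: it invokes the Brookes--Groves characterisation (Proposition~\ref{BG_dim}) to find a subgroup $E \le B$ with $\rk(E) = \gk(U)$ over which $U$ is not torsion, and in the case $\rk(E) < \rk(B)$ appeals to an external lemma (\cite{GU2} or \cite[Lemma~4.10]{GU1}) producing a complementary subgroup $E'$ with $F \s E'$ commutative, from which a nontrivial central element of $\mathcal A$ is extracted. Your argument is elementary and self-contained: you show that $Z(\mathcal A) = F$ forces the conjugation character $\chi$ on $B$ to be injective, whence a Vandermonde computation gives that $\mathcal B$ has no nonzero proper $\sigma$-stable ideal, whereas the $\mathcal B$-annihilator of a finitely generated $\mathcal B$-torsion $\mathcal A$-module would be exactly such an ideal. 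In effect you are establishing $\sigma$-simplicity of $\mathcal B$ (equivalently, simplicity of $\mathcal A$) directly; this bypasses all the GK machinery and the cited lemma, at the price of being tailored to the commutative-$\mathcal B$ setting, while the paper's dimension-theoretic framework is what also handles the companion case $\dim(\mathcal A)=1$. One small remark: the Vandermonde step only needs injectivity of $\chi$, and that already follows from the absence of nontrivial central elements of $\bar t$-degree zero; since $B$ is torsion-free, injectivity of $\chi$ automatically makes $\chi(B)$ torsion-free, so the ``subtle step'' you flag about positive $\bar t$-degree is in fact not required for the argument.
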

\begin{proof}
Let $U$ be a simple $\mathcal A$-module finitely generated as $\mathcal B$-module.
Let $U'$ denote the $\mathcal B$-module structure of $U$. By \cite[Lemma 2.7]{BG}, $\gk(U') = \gk(U)$.
In view of Proposition \ref{BG_dim}, let $E < B$ be such that $B/E$ is torsion-free, $\rk(E) = \gk(U')$ and $U'$ is not 
$F \s E$-torsion. If $\rk(E) = \rk(B)$, it follows that $E = B$ and thus $U$ is not $F \s B$-torsion. 

Suppose that $\rk(E) < \rk(B)$. 
By \cite{GU2} or \cite[Lemma 4.10]{GU1}, there is a subgroup $E' < A$ such that $\rk(EE') = \rk(A)$, $E \cap E' = \langle 1 \rangle$ and $F \s E'$ is commutative. Then $E'\cap B > \langle 1 \rangle$ and $F \s (E' \cap  B)$ is central in $F \s (EE')$. Since $[A : EE'] < \infty$, it follows that $F \s A$ has center larger than $F$ contrary to the hypothesis.     
\end{proof}

\begin{proposition}
\label{dim_1_cs}
If $\dim(\mathcal A) = 1$, then Condition \ref{HM_str_con1} is satisfied.
\end{proposition}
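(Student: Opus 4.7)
The plan is to assume for contradiction that a simple $\mathcal A$-module $U$, finitely generated as $\mathcal B$-module, is $\mathcal B$-torsion, and then sandwich $\gk(U)$ between two incompatible bounds forced by $\dim(\mathcal A)=1$.

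First I would establish the combinatorial setup. Since $A/B$ is infinite cyclic, the short exact sequence $0\to B\to A\to A/B\to 0$ splits, so $B$ is free abelian of rank $n-1$, and we can pick a free basis $a_1,\dots,a_n$ of $A$ with $a_1,\dots,a_{n-1}$ a free basis of $B$ and $a_n$ a lift of a generator of $A/B$.

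Next I would apply Proposition \ref{BG_dim} to the $\mathcal B$-module $U'$ underlying $U$ (which is finitely generated over $\mathcal B$ by hypothesis). The proposition, applied with the family $\mathcal F_B=\{\langle Y\rangle : Y\subset\{a_1,\dots,a_{n-1}\}\}$, expresses $\gk(U')$ as the supremum of $|Y|$ over those $Y$ with $U'$ not $F\s\langle Y\rangle$-torsion. The only $Y$ that could produce the value $n-1$ is $Y=\{a_1,\dots,a_{n-1}\}$, giving $\langle Y\rangle=B$; but this is excluded by our standing assumption that $U'$ is $\mathcal B$-torsion. Hence $\gk(U')\le n-2$, and by \cite[Lemma 2.7]{BG} (as used in the proof of Proposition \ref{dim_n-1_cs}) $\gk(U)=\gk(U')\le n-2$.

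On the other hand, $U$ is a nonzero finitely generated $\mathcal A$-module, so Corollary \ref{B_ine} together with the hypothesis $\dim(\mathcal A)=1$ gives
\[
\gk(U)\ge \rk(A)-\dim(\mathcal A)=n-1,
\]
contradicting the bound $\gk(U)\le n-2$ derived above. Thus $U$ is not $\mathcal B$-torsion, establishing Condition~\ref{HM_str_con1}.

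There is no real obstacle here: the argument is just a clean rank count enabled by the fact that $\dim(\mathcal A)=1$ makes the lower bound on $\gk(U)$ precisely equal to $\rk(B)$, while $\mathcal B$-torsion of $U'$ pushes $\gk(U')$ strictly below $\rk(B)$. The only step that requires any care is invoking the free-basis form of Proposition \ref{BG_dim} for $U'$, which is legitimate because $B$ is free abelian of rank $n-1$.
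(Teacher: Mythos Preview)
Your proof is correct and follows essentially the same approach as the paper: both arguments combine \cite[Lemma 2.7]{BG} (so $\gk(U)=\gk(U')$), Proposition~\ref{BG_dim} (linking $\gk(U')=\rk(B)$ to $U'$ not being $\mathcal B$-torsion), and Corollary~\ref{B_ine} (forcing $\gk(U)\ge n-1$ when $\dim(\mathcal A)=1$). The only cosmetic difference is that the paper argues directly---showing $\gk(U')=\rk(B)$ and concluding $U'$ is not $\mathcal B$-torsion---whereas you phrase the same implication contrapositively.
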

\begin{proof}
Let $U$ be a simple $\mathcal A$-module that is finitely generated as $\mathcal B$-module. Let $U'$ be its 
$\mathcal B$ module structure. Then $\gk(U) = \gk(U')$ by \cite[Lemma 2.7]{BG}. If $\gk(U') = \rk(B)$ then $U'$ (and hence $U$) is not $\mathcal B$-torsion by Proposition \ref{BG_dim} and we are done.

Assume that $\gk(U') < \rk(B)$. Then $\dim(\mathcal A) > 1$ in view of Corollary \ref{B_ine} contrary to the hypothesis.  
\end{proof}

We have just seen that for $\dim(\mathcal A) = 1$ or $n - 1$ and $\mathscr Z(\mathcal A) = F$, where 
$\mathscr Z(\mathcal A)$ denotes the center of $\mathcal A$, modules induced from simple $\mathcal B$-modules 
remain simple.  We shall see in the next section that for these two cases modules obtained by contraction (as explained below) are also simple. It is possible to construct other examples satisfying Condition \ref{HM_str_con1} using \cite[Theorem 3.9]{MP}.

\subsection{Contraction of maximal right ideals of $\Sigma$}
Let $\mathfrak m$ be a maximal right ideal in $\Sigma$. Then $\mathfrak m$ is generated by an irreducible in $\Sigma$.
Moreover $S(\mathfrak m) = \mathcal A/\mathcal A \cap \mathfrak m$ is a cyclic $\mathcal A$-module that is torsion-free over 
$\mathcal B$.
Following \cite{AR}, an element $\alpha \in \mathcal A$ is  \emph{unitary with respect to} $\bar t$ if in the (unique) expresssion:
$\alpha = \sum_{i = p}^q \beta_i\bar t^i$, where $p \le q \in \mathbb Z$ and $\beta_i \in \mathcal B$, the terminal coefficients $\beta_p$ and $\beta_q$ are units. By \cite{AR}, for a right ideal $\mathcal I$ of $\mathcal A$, 
$\mathcal A/ \mathcal I$ is finitely generated as $\mathcal B$-module if and only if $\mathcal I$ contains an element unitary with respect to $\bar t$. 
It is shown in \cite[Lemma 3.4]{BVO} that $S(\mathfrak m)$ is simple if and only if $\Hom_{\mathcal A}(S(\mathfrak m), N) = 0$ whenever $N$ is a simple $\mathcal A$-module with $S$-torsion. Recall that we have defined $S$ as $S = \mathcal B \setminus \{0\}$. 

\begin{proposition}
\label{GKdim_ft}
With the above notation, $\gk(S(\mathfrak m)) = \rk(B)$. 
\end{proposition}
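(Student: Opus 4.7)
The plan is to sandwich $\gk(S(\mathfrak m))$ between two coincident bounds using Proposition \ref{BG_dim}. Since $A/B$ is infinite cyclic, $\rk(B) = n - 1$, so the desired value equals $n - 1$.

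For the lower bound I use the observation, recorded just above the statement of the proposition, that $S(\mathfrak m)$ is torsion-free as a right $\mathcal B$-module. As $S(\mathfrak m) \neq 0$, it is in particular not $\mathcal B$-torsion; hence Proposition \ref{BG_dim} applied with the subgroup $B$ of rank $n - 1$ immediately yields $\gk(S(\mathfrak m)) \geq \rk(B)$.

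For the upper bound I will show that $S(\mathfrak m)$ is $\mathcal A$-torsion. First, $\Sigma = \mathscr D[\bar t^{\pm 1}, \sigma]$ is a Noetherian principal ideal domain but not a division ring, so its maximal right ideal $\mathfrak m$ must be nonzero. Picking any $g \in \mathfrak m \setminus \{0\}$ and writing $g = a s^{-1}$ with $a \in \mathcal A$, $s \in S$ (using $\Sigma = \mathcal A S^{-1}$), one obtains $a = gs \in \mathcal A \cap \mathfrak m$ with $a \neq 0$; thus $J := \mathcal A \cap \mathfrak m$ is a nonzero right ideal of $\mathcal A$. Given any $\bar c \in \mathcal A$, the right Ore property of the Noetherian domain $\mathcal A$ produces a nonzero $r \in \mathcal A$ and an $a' \in \mathcal A$ with $\bar c \, r = a a' \in J$, so the image of $\bar c$ in $S(\mathfrak m)$ is annihilated by the regular element $r$. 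Hence $S(\mathfrak m)$ is $\mathcal A$-torsion, and Proposition \ref{BG_dim}, applied using the family $\mathcal F$ attached to a fixed free basis of $A$ (whose only rank-$n$ member is $A$ itself, now excluded), forces $\gk(S(\mathfrak m)) \leq n - 1$.

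The two bounds combine to give $\gk(S(\mathfrak m)) = n - 1 = \rk(B)$. No step is difficult; the only real content is verifying that $\mathcal A \cap \mathfrak m$ is nonzero and then propagating this to $\mathcal A$-torsion of every element of $S(\mathfrak m)$ through the Ore condition.
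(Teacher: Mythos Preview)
Your argument is correct. Both the lower bound (via $\mathcal B$-torsion-freeness and Proposition~\ref{BG_dim}) and the upper bound (via $\mathcal A$-torsion and the $\mathcal F$-version of Proposition~\ref{BG_dim}) are sound; the only cosmetic omission is that your Ore-condition step needs $\bar c \neq 0$, the case $\bar c = 0$ being trivial.

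Your route, however, is genuinely different from the paper's. The paper argues that $S(\mathfrak m)$ is finitely generated \emph{as a $\mathcal B$-module}, then invokes \cite[Lemma~2.7]{BG} to equate its $\mathcal A$-GK dimension with its $\mathcal B$-GK dimension, which is $\rk(B)$ by Proposition~\ref{BG_dim} applied over $\mathcal B$ together with $\mathcal B$-torsion-freeness. You bypass both the finite-generation claim and the external lemma by showing directly that $\mathcal A \cap \mathfrak m \neq 0$ and hence that $S(\mathfrak m)$ is $\mathcal A$-torsion. This buys you something: $\mathcal B$-finite-generation of $S(\mathfrak m)$ is, by the remark just before the proposition, equivalent to $\mathcal A \cap \mathfrak m$ containing a unitary element, a hypothesis that only enters explicitly in the subsequent results. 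Your proof therefore establishes the proposition without any tacit appeal to that hypothesis, and with one fewer citation.
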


\begin{proof}
As noted above, $S(\mathfrak m)$ is finitely generated and torsion-free as $\mathcal B$-module. 
The proposition follows from \cite[Lemma 2.7]{BG} and Propostion \ref{BG_dim}. 
\end{proof}

\begin{proposition}
\label{ctr_max_idl}
If $\mathcal A \cap \mathfrak m$ has an element unitary with respect to $\bar t$ then $S(\mathfrak m)$ is GK-critical. 
\end{proposition}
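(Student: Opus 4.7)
The plan is to reduce the criticality assertion to the claim that $S(\mathfrak{m})/W_1$ is $\mathcal{B}$-torsion for every nonzero proper $\mathcal{A}$-submodule $W_1$, and then to extract the required dimension bound from Proposition \ref{BG_dim}. By hypothesis $\mathcal{A}\cap\mathfrak{m}$ contains a $\bar t$-unitary element, so by \cite{AR} $S(\mathfrak{m})$ is finitely generated as a $\mathcal{B}$-module; consequently Proposition \ref{GKdim_ft} gives $\gk(S(\mathfrak{m})) = \rk(B) = n-1$, and any quotient $S(\mathfrak{m})/W_1$ remains finitely generated over $\mathcal{B}$ because $W_1$ is in particular a $\mathcal{B}$-submodule.

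For the main step I would use the natural inclusion $\mathcal{A}\hookrightarrow\Sigma$, which induces an embedding $S(\mathfrak{m}) = \mathcal{A}/(\mathcal{A}\cap\mathfrak{m})\hookrightarrow \Sigma/\mathfrak{m}$ of right $\mathcal{A}$-modules. Since $\mathfrak{m}$ is a maximal right ideal of $\Sigma$, the quotient $\Sigma/\mathfrak{m}$ is a simple right $\Sigma$-module; the right $\Sigma$-submodule $W_1\Sigma$ of $\Sigma/\mathfrak{m}$ is nonzero because it contains $W_1$, so $W_1\Sigma = \Sigma/\mathfrak{m}$. Writing $\Sigma$ as the Ore localization $\mathcal{A}S^{-1}$, every $x\in S(\mathfrak{m})\subseteq \Sigma/\mathfrak{m}$ can then be expressed as $ws^{-1}$ with $w\in W_1$ and $s\in S$, so $xs \in W_1$. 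This shows that every element of $S(\mathfrak{m})/W_1$ is annihilated by some element of $S$, i.e.\ $S(\mathfrak{m})/W_1$ is $\mathcal{B}$-torsion.

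To finish, I would invoke \cite[Lemma 2.7]{BG} to identify $\gk(S(\mathfrak{m})/W_1)$ with its Gelfand--Kirillov dimension as a $\mathcal{B}$-module; this applies because $S(\mathfrak{m})/W_1$ is finitely generated over $\mathcal{B}$. Applying Proposition \ref{BG_dim} to $\mathcal{B} = F\s B$ with the free generating set $a_1,\ldots,a_{n-1}$ of $B$, the family $\mathcal{F}$ attached to $B$ contains exactly one subgroup of rank $n-1$, namely $B$ itself, and the $\mathcal{B}$-torsion property we just established excludes it from the supremum. Therefore
\[
\gk(S(\mathfrak{m})/W_1) \leq n - 2 < n - 1 = \gk(S(\mathfrak{m})),
\]
which is precisely the GK-criticality of $S(\mathfrak{m})$. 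The delicate step is the $\mathcal{B}$-torsion claim, which simultaneously uses the simplicity of $\Sigma/\mathfrak{m}$ as a $\Sigma$-module and the Ore character of $\Sigma$ over $\mathcal{A}$; the rest is bookkeeping via the dimension formula from \cite{BG}.
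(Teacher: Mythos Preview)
Your proof is correct and follows essentially the same route as the paper's. Both arguments compute $\gk(S(\mathfrak m))=\rk(B)$ via Proposition~\ref{GKdim_ft}, then show any proper quotient is $\mathcal B$-torsion by exploiting the maximality of $\mathfrak m$ in $\Sigma$ together with the Ore property of $S$, and finally invoke \cite[Lemma~2.7]{BG} and Proposition~\ref{BG_dim} to drop the GK dimension below $\rk(B)$. The only cosmetic difference is that the paper works on the ideal side: it takes a right ideal $\mathcal I$ with $\mathcal A\cap\mathfrak m\subsetneq\mathcal I\subsetneq\mathcal A$, observes that $\mathcal I S^{-1}=\Sigma$ forces $\mathcal I\cap S\ne\emptyset$, and then uses the Ore condition elementwise to see that $\mathcal A/\mathcal I$ is $\mathcal B$-torsion; you instead pass to the simple $\Sigma$-module $\Sigma/\mathfrak m$ and use $W_1\Sigma=\Sigma/\mathfrak m$ to write each $x$ as $ws^{-1}$. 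These are the same computation viewed from the two sides of the ideal--submodule correspondence.
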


\begin{proof}
Let $\mathcal I$ be a right ideal of $\mathcal A$ such that $\mathcal A \cap \mathfrak m < I < \mathcal A$.
Since $\mathfrak m$ is a maximal right ideal of $\Sigma$, hence 
$\mathcal I \cap S$ has an element $\beta$.  Let $0 \ne \alpha \in \mathcal A$, then by the right Ore property of 
$S$, 
$\alpha\beta' = \beta\alpha'$, where $\beta' \in S$ and $\alpha' \in \mathcal A$. 
It follows that $\mathcal A/ \mathcal I$ is $\mathcal B$-torsion.
Using \cite[Lemma 2.7]{BG} and Proposition \ref{BG_dim},
$\gk(\mathcal A/\mathcal I) < \rk(B)$.
Thus by Proposition \ref{GKdim_ft}, $S(\mathfrak m)$ is critical.
\end{proof}

\begin{theorem}
\label{dim_n-1_ctr}
If $\mathcal B$ is commutative, $\mathcal A$ has center $F$ and $\mathcal A \cap \mathfrak m$ contains an element unitary with respect to $\bar t$ then $S(\mathfrak m)$ is simple.
\end{theorem}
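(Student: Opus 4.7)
The plan is to combine the criterion from \cite[Lemma 3.4]{BVO} mentioned just before Proposition \ref{GKdim_ft} with the analogue of Condition \ref{HM_str_con1} established in Proposition \ref{dim_n-1_cs}, using a straightforward contradiction argument.

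First I would invoke \cite[Lemma 3.4]{BVO}, which says that $S(\mathfrak m)$ is simple if and only if $\Hom_{\mathcal A}(S(\mathfrak m), N) = 0$ for every simple $\mathcal A$-module $N$ that is $S$-torsion. So suppose, for contradiction, that there is a simple $\mathcal A$-module $N$ which is $S$-torsion and a nonzero $\phi : S(\mathfrak m) \to N$. Since $N$ is simple, $\phi$ is surjective, so $N$ is a quotient of $S(\mathfrak m)$.

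Next I would use the unitary hypothesis on $\mathcal A \cap \mathfrak m$: by Artamonov's criterion quoted in the paragraph above Proposition \ref{GKdim_ft} (see \cite{AR}), the presence of a $\bar t$-unitary element in $\mathcal A \cap \mathfrak m$ guarantees that $S(\mathfrak m)$ is finitely generated as a $\mathcal B$-module. Its quotient $N$ is then also finitely generated over $\mathcal B$. On the other hand, being $S$-torsion with $S = \mathcal B \setminus \{0\}$ is precisely being $\mathcal B$-torsion.

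Finally, I would appeal to Proposition \ref{dim_n-1_cs}: since $\mathcal B$ is commutative and $\mathscr Z(\mathcal A) = F$, Condition \ref{HM_str_con1} holds, i.e., no simple $\mathcal A$-module that is finitely generated as a $\mathcal B$-module can be $\mathcal B$-torsion. This contradicts what we just established for $N$, so no such $N$ exists and $S(\mathfrak m)$ must be simple. The main (mild) obstacle is keeping the dictionary straight between $S$-torsion, $\mathcal B$-torsion, and the GK-theoretic language of Proposition \ref{BG_dim} that underlies Proposition \ref{dim_n-1_cs}; once that is in place the argument is essentially a two-line contradiction, and no further use of the GK-critical property of $S(\mathfrak m)$ from Proposition \ref{ctr_max_idl} is actually needed here (though it confirms that $S(\mathfrak m)$ has a reasonable chance of being simple).
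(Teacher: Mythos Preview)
Your proposal is correct and matches the paper's own proof almost step for step: both use the \cite[Lemma 3.4]{BVO} criterion, pass finite generation over $\mathcal B$ from $S(\mathfrak m)$ to its image $N$ via the unitary hypothesis, and then invoke Proposition \ref{dim_n-1_cs} to contradict $N$ being $\mathcal B$-torsion. Your observation that Proposition \ref{ctr_max_idl} is not actually needed here is also consistent with the paper's argument.
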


\begin{proof}
As $\mathcal A \cap \mathfrak m$ contains a unitary element, $S(\mathfrak m)$ is finitely generated as 
$\mathcal B$-module (see above).
As noted above, it suffices to show that $\Hom_{\mathcal A}(S(\mathfrak m), N) = 0$
for each $S$-torsion simple $\mathcal A$-module $N$. Indeed, if this was not the case then $N$ being an image of $S(\mathfrak m)$ and would also be finitely generated as $\mathcal B$-module.
But then by Proposition \ref{dim_n-1_cs}, $N$ is not $\mathcal B$-torsion.
The contradiction implies that $S(\mathfrak m)$ is simple.
\end{proof}

The following result was first obtained in \cite{AR} in a more general setting.
\begin{theorem}
If $\dim(\mathcal A) = 1$ and $\mathcal A \cap \mathfrak m$ contains an element unitary with respect to $\bar t$ then 
$S(\mathfrak m)$ is simple.
\end{theorem}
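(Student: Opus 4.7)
The plan is to mimic exactly the proof of Theorem \ref{dim_n-1_ctr}, replacing the appeal to Proposition \ref{dim_n-1_cs} by an appeal to Proposition \ref{dim_1_cs}. The hypothesis on unitary elements is what gives us finite generation over $\mathcal B$, and the hypothesis on $\dim(\mathcal A)$ is what prevents $\mathcal B$-torsion; these are the only two ingredients needed in the previous proof, and both remain available.

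More precisely, I would start by observing that since $\mathcal A \cap \mathfrak m$ contains an element unitary with respect to $\bar t$, the result from \cite{AR} quoted earlier in the text implies that $S(\mathfrak m) = \mathcal A/(\mathcal A \cap \mathfrak m)$ is finitely generated as a $\mathcal B$-module. Next, I would invoke \cite[Lemma 3.4]{BVO} as cited above: $S(\mathfrak m)$ is simple if and only if $\Hom_{\mathcal A}(S(\mathfrak m), N) = 0$ for every simple $\mathcal A$-module $N$ that is $S$-torsion.

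So I would assume for contradiction that a nonzero homomorphism $S(\mathfrak m) \to N$ exists for some $S$-torsion simple $\mathcal A$-module $N$. Then $N$ is a nonzero cyclic image of $S(\mathfrak m)$ and therefore also finitely generated as a $\mathcal B$-module. Since $S = \mathcal B \setminus \{0\}$, the module $N$ being $S$-torsion is exactly the statement that $N$ is $\mathcal B$-torsion. But by Proposition \ref{dim_1_cs}, any simple $\mathcal A$-module that is finitely generated over $\mathcal B$ cannot be $\mathcal B$-torsion (as $\dim(\mathcal A) = 1$). This contradiction shows no such $N$ exists, hence $S(\mathfrak m)$ is simple.

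There is no real obstacle here — the entire argument is parallel to that of Theorem \ref{dim_n-1_ctr}. The only thing to double-check is the equivalence between $S$-torsion and $\mathcal B$-torsion, which is immediate from the definition $S = \mathcal B \setminus \{0\}$, and the fact that a nonzero quotient of a finitely generated $\mathcal B$-module remains finitely generated over $\mathcal B$, which is standard.
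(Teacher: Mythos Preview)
Your proof is correct, but it takes a different route from the paper's own argument. You mimic Theorem \ref{dim_n-1_ctr} exactly: invoke the criterion from \cite[Lemma 3.4]{BVO} to reduce to $S$-torsion simple quotients, then apply Proposition \ref{dim_1_cs} to rule these out. The paper instead bypasses the \cite{BVO} criterion entirely and argues directly with GK-criticality: it takes an arbitrary nonzero proper quotient $N$ of $S(\mathfrak m)$, uses Proposition \ref{ctr_max_idl} to get $\gk(N) < \gk(S(\mathfrak m)) = \rk(B) = n-1$, and then appeals to Corollary \ref{B_ine} (which forces $\gk(N) \ge n - \dim(\mathcal A) = n-1$) for the contradiction. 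Your argument buys uniformity with the preceding theorem; the paper's argument is a shade more direct since it handles all proper quotients at once and avoids the detour through Proposition \ref{dim_1_cs} (whose proof, in any case, is essentially the same GK-dimension count hidden one level down).
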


\begin{proof}
We note that
$S(\mathfrak m)$ is finitely generated as $\mathcal B$-module.
If $S(\mathfrak m)$ is not simple, let $N$ be a quotient by 
a nonzero proper submodule.
By Proposition \ref{ctr_max_idl}, 
\[ \gk(N) < \gk(S(\mathfrak m)) = \rk(B). \] 
Hence $\dim(\mathcal A) > 1$ by Corollary \ref{B_ine} contrary to the hypothesis.
\end{proof}

\section{Conclusion}
Let $\mathscr Z(\mathcal A) = F$. If $\dim(\mathcal A) = 1$ or $n -1$ then the modules induced from simple $\mathcal B$-modules remain simple. Similarly the module $S(\mathfrak m)$, where $\mathfrak m$ is a maximal right ideal of 
the PID $\Sigma = AS^{-1}$, is simple if 
$\mathfrak m \cap \mathcal A$ has a unitary element. For the hereditary case
($\dim(\mathcal A) = 1$): \[ \gk(V \otimes_{\mathcal B} \mathcal A) = n - 1 = \gk(S(\mathfrak m)), \] 
since $\dim(\mathcal A) =1$ and Corollary \ref{B_ine} imply $\gk(V) \ge n - 2$ for a simple $\mathcal B$-module $V$ .

In the case $\gk(\mathcal A) = n - 1$, $\dim_F(V) < \infty$ for any simple $\mathcal B$-module $V$ by the Hilbert Nullstellensatz. 
Hence
\[ \gk(V \otimes_{\mathcal B} \mathcal A) = 1, \  \   \   \   \  
  \gk(S(\mathfrak m)) = n - 1. \]

We conclude by conjecturing that for $\dim(\mathcal A) = n- 1$, $1$ and $n - 1$ are the only possible GK dimensions for simple $\mathcal A$-modules.


\begin{thebibliography}{1}

\bibitem[AR]{AR} Artamonov, V. A., \emph{General quantum polynomials: irreducible modules and Morita equivalence}, 
Izv. Math. \textbf{63} (1999), 3--36.

\bibitem[BVO]{BVO} Bavula, V, van Oystaeyen, F., \emph{ Simple holonomic modules over the second Weyl algebra $A_2$},  Adv. Math.  \textbf{150}  (2000),  80--116.

\bibitem[BR]{BR} Brookes, C.J.B., \emph{Crossed Products and finitely presented groups}, 
J. Group Theory \textbf{3} (2000), 433--444. 

\bibitem[BG]{BG} Brookes C.J.B., Groves J.R.J., \emph{Modules over crossed products of a division ring with an abelian group I}, 
J. Algebra  \textbf{229} (2000), 25--54. 

\bibitem[GU1]{GU1} Gupta A. , \emph{Modules over quantum Laurent polynomials I}, 	arXiv:1105.0596v1.  
 
\bibitem[GU2]{GU2}  Gupta A. , \emph{Modules over quantum Laurent polynomials II}, forthcoming. 

\bibitem[HM]{HM} Hodges, T.; McConnell, J. C. \emph{On Ore and skew-Laurent extensions of Noetherian rings},  J. Algebra  \textbf{73} (1981), 56--64. 

\bibitem[KL]{KL} Krause G.R., Lenagan T. H., \emph{Growth of algebras and Gelfand--Kirillov dimension}, Pitman, 1985. 

\bibitem[MA]{MA} Mannin Yu., \emph{Topics in noncommutative geometry}, Princeton Univ. Press, 1991. 
 
\bibitem[MP]{MP} McConnell, J. C., Pettit, J.J., \emph{Crossed products and multiplicative analogues 
of Weyl algebras}, 
J. Lond. Math. Soc.  
\textbf{38}  
(1988), 47--55. 
 
\bibitem[MR]{MR} McConnell, J. C.,Robson, J. C., \emph{Noncommutative Noetherian rings}, American Mathematical Society, 2001. 

\bibitem[PA]{PA} Passman D.S, \emph{Infinite crossed products}, Academic Press, London, 1989. 

\end{thebibliography}
\end{document}